\DeclareMathOperator{\supp}{supp}
\newcommand{\R}{\mathbb{R}}
\newcommand{\N}{\mathbb{N}}
\newtheorem{theorem}{Theorem}
\newtheorem{definition}{Definition}
\newtheorem{remark}{Remark}
\newtheorem{conjecture}{Conjecture}
\newtheorem{proposition}{Proposition}
\title[Fundamental solutions of the Logarithmic Laplacian]{Fundamental Solutions of the Logarithmic Laplacian: An Approach via the Division Problem}
\author{
  David Lee
}
\address{School of Mathematical and Physical Sciences, University of Technology Sydney (UTS)}
\email{davidchanwoo.lee@uts.edu.au}
\date{\today}
\begin{document}

\begin{abstract}
Existence of the fundamental solution of the logarithmic Laplacian (in dimensions $d \geq 3$) was established by Huyuan Chen and Laurent V\'eron (2024). In this note, we present an alternative approach, based on a modification on the classical division problem. This is inspired by the theory of fundamental solutions by Malgrange and Ehrenpreis. Moreover, we give a variant of the Liouville theorem for the logarithmic Laplacian and give some further clarification 
regarding a conjecture posed by Chen and V\'eron regarding the behavior of solutions in dimensions 1 and 2.
\end{abstract}

\subjclass[2010]{35A08, 35JXX, 35R11, 42B37, 42B10,42B20}
\thanks{Keywords: logarithmic Laplacian; fundamental solution; non-local}
\maketitle

\section{Introduction}

The logarithmic Laplacian is a nonlocal operator that arises as the formal derivative of the fractional Laplacian with respect to its exponent at zero. Namely, as introduced by Huyuan Chen and Tobias Weth~\cite{MR3995092} is given formally by

\begin{equation*}
\log(-\Delta)f:=\lim_{\sigma \rightarrow 0^{+}}\frac{1}{\sigma}\left ((-\Delta)^\sigma f-f \right ).
\end{equation*}
One can express the logarithmic Laplacian as
\begin{equation}\label{eqn:logarithmic_laplacian}
\log(-\Delta)f(x)=\gamma_{d}\int_{|x-y|\leq 1}\frac{f(x)-f(y)}{|x-y|^d}\,dy-\gamma_{d}\int_{|x-y|>1}\frac{f(y)}{|x-y|^d}\,dy+\rho_{d} f(x), \quad \text{for $x\in \mathbb{R}^d$, }
\end{equation}
where 
\begin{equation*}
\gamma_{d}=\frac{2}{\omega_{d-1}}, \quad \rho_{d}=2\log(2)+\psi(d/2)-\gamma_{E},
\end{equation*}
$\psi$ is the digamma function, $\omega_{d-1}$ is the surface area of $S^{d-1}$ (the unit sphere) and $\gamma_{E}$ is the Euler-Mascheroni constant. Furthermore, one can justify that the notation of $\log(-\Delta)$ is valid due to the identity via the Fourier transform $(\mathcal{F})$:
\begin{equation*}
	\mathcal{F}\left ( \log(-\Delta)f\right )(\xi)=2\log(|\xi|)\hat{f}(\xi), \quad \text{for $\xi \in \mathbb{R}^d\setminus \{0\}$,}
\end{equation*}
where $f\in \mathcal{S}(\mathbb{R}^d)$ (the space of Schwartz functions) and $\hat{f}$ is the Fourier transform of $f$. This non-local operator has importance in studying the asymptotic behavior of non-local elliptic problems involving the fractional Laplacian (with respect to the exponent), cf. \cite{MR3995092}, \cite{bungert2024convergenceratesfractionallocal} and references therein.

In the recent work \cite{MR4741033}, Huyuan Chen and Laurent V\'eron established the existence of the fundamental solution of the logarithmic Laplacian in dimensions $d\geq 3$. Namely, they solved the following problem:
\begin{equation*}
	\log(-\Delta)u=\delta_0, \quad \text{in $\R^d$}, 
\end{equation*}
where $ \delta_0 $ is the Dirac measure with mass at the origin.
We now present to the reader of the result of Huyuan Chen and Laurent V\'eron\ \cite{MR4741033}:
\begin{theorem}\label{thm:chen_veron}
	~\cite[Theorem 1.8]{MR4741033}
	Let $\mathcal{I}_{\alpha}$ be the Riesz potential operator (acting on a suitable class of measures) defined by
	\begin{equation*}
(\mathcal{I}_{\alpha}\mu)(x) = \frac{\Gamma\left (\frac{d-\alpha}{2}\right )}{\pi^{d/2}2^{\alpha}\Gamma(\alpha/2)}\int_{\mathbb{R}^d} \frac{\mu(dy)}{|x-y|^{d-\alpha}} \, dy, \quad \text{for $x\in \mathbb{R}^d$ and $\alpha \in (0,d/2)$.}
	\end{equation*}
	Let $d \geq 3$ and consider the following expression: 
\begin{equation}
\label{eqn:fundamental_solution_logarithmic_laplacian_chen}
\Phi_{\log}(x) = \int_0^1 (\mathcal{I}_{2t}\delta_0)(x) \, dt +\int_0^1 (\mathcal{I}_{2t}\Phi_1) (x) dt, \quad \text{for $x\in \mathbb{R}^d \setminus \{0\}$,}
\end{equation}
where $\Phi_1$ is a fundamental solution of the Helmholtz equation $ -\Delta u - u = \delta_0 $, given by
\begin{equation*}
\Phi_1(x) = \frac{i}{4} \left( \frac{1}{2\pi |x|} \right)^{\frac{d}{2} - 1} H^{(1)}_{\frac{d}{2} - 1}(|x|),\quad \text{for } x \in \mathbb{R}^d \setminus \{0\}.
\end{equation*}
Then, we have that  $\Phi_{\log}$ is a fundamental solution of the logarithmic Laplacian, i.e:
\begin{equation}\label{eqn:fundamental_solution_logarithmic_laplacian_old}
\log(-\Delta)\Phi_{\log}=\delta_0, \quad \text{in }\,\mathcal{D}'(\mathbb{R}^d),
\end{equation}
where $\mathcal{D}'(\R^d)$ is the classical space of distributions.

 Moreover, $ \Phi_{\log} $ is a (radial) locally integrable function which satisfies the following asymptotic behavior as $ |x| \to \infty $:
\begin{equation*}
|\Phi_{\log}(x)| \lesssim \frac{|x|^{\frac{3 - d}{2}}}{\log |x|} \quad \text{for } |x| \geq 2,
\end{equation*}
\end{theorem}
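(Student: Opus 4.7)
The plan is to verify the identity on the Fourier side: show that $\widehat{\Phi_{\log}}(\xi) = (2\log|\xi|)^{-1}$ as a tempered distribution on $\R^d$, with the right-hand side interpreted in the principal-value sense across the unit sphere $\{|\xi|=1\}$ where $\log|\xi|$ vanishes. Multiplying by the symbol $2\log|\xi|$ of $\log(-\Delta)$ then returns the constant function $1 = \widehat{\delta_0}$. The three Fourier-side ingredients I would invoke are the Riesz-potential multiplier identity $\widehat{\mathcal{I}_\alpha \mu}(\xi) = |\xi|^{-\alpha}\widehat{\mu}(\xi)$ for $\alpha \in (0,d/2)$, the identity $\widehat{\Phi_1}(\xi) = (|\xi|^2-1)^{-1}$ (principal value at $|\xi|=1$) for the Helmholtz Green function, and Fubini to interchange the $t$-integral with the Fourier transform.

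Granting these, the calculation is a short algebraic manipulation. Applying $\mathcal{F}$ to the first summand in the definition of $\Phi_{\log}$ gives $\int_0^1 |\xi|^{-2t}\, dt = (1-|\xi|^{-2})/(2\log|\xi|)$, while applying it to the second gives $(|\xi|^2-1)^{-1}$ times the same integral, which collapses to $|\xi|^{-2}/(2\log|\xi|)$ via the identity $(1-|\xi|^{-2})/(|\xi|^2-1) = |\xi|^{-2}$. Adding, one obtains
\begin{equation*}
\widehat{\Phi_{\log}}(\xi) \;=\; \frac{1 - |\xi|^{-2}}{2\log|\xi|} + \frac{|\xi|^{-2}}{2\log|\xi|} \;=\; \frac{1}{2\log|\xi|},
\end{equation*}
as desired. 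A separate verification that the two space-side integrals converge absolutely for $x\neq 0$ then yields local integrability and radiality of $\Phi_{\log}$.

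The subtle point, where I expect most of the effort to concentrate, is handling the singularity on $\{|\xi| = 1\}$ in the distributional sense. Each Fourier-side summand individually carries the factor $(1 - |\xi|^{-2})$, which has a first-order zero on the unit sphere, so neither summand is a genuine principal-value distribution there; only after combining does the Fourier transform reduce to the principal value of $(2\log|\xi|)^{-1}$ (whose singularity, since $\log|\xi| = (|\xi|-1) + O((|\xi|-1)^2)$, is of $(|\xi|-1)^{-1}$-type transverse to the sphere). I would make this rigorous by a regularization argument -- for instance truncating the $t$-integrals to $[\varepsilon, 1]$ or smoothing $\log|\xi|$ to $\log|\xi|+i\eta$ -- verifying the multiplier identity against arbitrary $\varphi \in \mathcal{S}(\R^d)$ for the regularized object, and passing to the limit using absolute integrability of the symbol away from $|\xi| \in \{0,1,\infty\}$ together with the cancellation just described. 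Finally, the asymptotic estimate $|\Phi_{\log}(x)| \lesssim |x|^{(3-d)/2}/\log|x|$ for $|x| \geq 2$ follows from the large-argument expansion of the Hankel function $H^{(1)}_{d/2-1}$, which yields $\Phi_1(x) = O(|x|^{-(d-1)/2})$, combined with a Laplace-method estimate on the $t$-integral whose dominant contribution near $t = 0$ produces the factor $(\log|x|)^{-1}$.
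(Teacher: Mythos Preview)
This theorem is not proved in the present paper: it is quoted from Chen and V\'eron~\cite{MR4741033} as background, and the paper's own contribution is the \emph{alternative} route via the division problem leading to Theorems~\ref{thm:classification_logarithmic_laplacian} and~\ref{thm:main_2}. So there is no proof here against which to compare yours. That said, immediately after stating the theorem the author records exactly the formal identity underlying your computation,
\[
\frac{1}{\log(-\Delta)}=\int_0^1 (-\Delta)^{-t}\,dt +\int_0^1 (-\Delta)^{-t}\bigl((-\Delta)-1\bigr)^{-1}\,dt,
\]
describing it as a Schwinger-trick variant; your Fourier-side algebra is the symbol-level version of this and is correct.

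On the sketch itself, one point is slightly mis-stated. The first Fourier-side summand $(1-|\xi|^{-2})/(2\log|\xi|)$ has a \emph{removable} singularity at $|\xi|=1$ (numerator and denominator both vanish to first order) and is a genuine locally integrable function there; it is the second summand $|\xi|^{-2}/(2\log|\xi|)$ that already carries, by itself, the full first-order principal-value singularity on the sphere. So the ``cancellation after combining'' you describe is not what happens --- the analytic difficulty sits entirely in the second term, namely in making rigorous sense of $\mathcal{I}_{2t}$ applied to $\Phi_1$ (whose Fourier transform is only a p.v.\ distribution, not a function) and in justifying Fubini for the $t$-integral against that distribution. Your regularization proposal is the natural move but would require substantial work; likewise the asymptotic bound, where the real content is controlling $\int_0^1 (\mathcal{I}_{2t}\Phi_1)(x)\,dt$ uniformly for large $|x|$, not merely invoking the Hankel asymptotics of $\Phi_1$.
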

For the observed reader, one can notice that the fundamental solution considered by Chen and V\'eron, defined in~\eqref{eqn:fundamental_solution_logarithmic_laplacian_chen} has a interpretation which can be thought of as a variant of the Schwinger trick.

To remind the reader, the Schwinger trick is a technique that allows one to express the inverse of an operator as an integral involving a parameter. The most famous application of the Schwinger trick (although probably not originally due to Schwinger) being that the Coulomb kernel can be expressed as the integral of the heat kernel:
\begin{equation*}
		\frac{\Gamma\left(\frac{d}{2}-1\right)}{4\pi^{d/2}}|x|^{-(d-2)} = \int_0^\infty (4\pi t)^{-d/2} e^{-\frac{|x|^2}{4t}} \, dt, \quad \text{for $x\in \mathbb{R}^d\setminus \{0\}$}. 
\end{equation*}
Intuitively, the above identity can be reinpreted via the formal expresssion:
\begin{equation*}
	(-\Delta)^{-1}=\int_0^\infty e^{-t(-\Delta)} \, dt.
\end{equation*}
Nevertheless, it should be noted that this intuition breaks down in dimensions $1$ and $2$ due to the fact that the heat kernel (with respect to $t$) is not integrable in dimensions $1$ or $2$. 

To compare with the original problem considered by Chen and V\'eron, the proposed fundamental solution~\eqref{eqn:fundamental_solution_logarithmic_laplacian_chen} for the logarithmic Laplacian can also be obtained via a similar vein. Namely, one can obtain an expression for the fundamental solution via a formal identity utilizing the Riesz potential operator $\mathcal{I}_{2t}$:
\begin{equation*}
\frac{1}{\log(-\Delta)}=\int_0^1 (-\Delta)^{-t} \, dt +\int_0^1 (-\Delta)^{-t}((-\Delta)-1)^{-1} dt.
\end{equation*}
 Much like in the case of the Laplacian, it is not well-defined in dimensions $d=1,2$ since it becomes problematic to give meaning to the above expression. Despite this, there is a well established theory of fundamental solutions for linear partial differential operators with constant coefficients due to the Malgrange and Ehrenpreis theorem, cf.~\cite{MR86990,MR70048,MR68123,MR3995092} and~\cite{MR3379898} for a more recent treatement. 

In this note, \textbf{we present an alternative approach of obtaining the existence of a fundamental solution for the logarithmic Laplacian, based on a modification of the ideas of Malgrange and Ehrenpreis, that allows us to consider all dimensions}. Moreover, we give some insight into a conjecture raised by Chen and V\'eron, see Conjecture \ref{conjecture:chen_veron}. We also give a Liouville theorem for the logarithmic Laplacian. Of late, there has been considerable interest in classificiation of solutions for Helmholtz equations of non-local operators, cf.~\cite{MR4709301, MR4546886, MR4845366, MR3511811, hauer2023characterization, MR3477075}. In ~\cite{MR4709301}, Cheng, Li and Yang prove a Liouville theorem for a large class of non-local operators (under the assumption that the solution is bounded). Here, our proof of the Liouville theorem for the logarithmic Laplacian removes the assumption of boundedness but requires the use of Lizorkin distributions. Nevertheless, the classification of solutions greatly helps us in certain technical aspects of the proof. 

The fundamental idea utilized in this article is the following. Rather directly solving \eqref{def:fundamental_soln_log}, we apply the Fourier transform (in an appropriate way) to solve the following \emph{divison problem}:
\begin{equation*}
	\log(|\cdot|^2)\Psi=1.
\end{equation*}
A very naive solution for the above problem could be given by $\Psi(\xi):=\log(|\xi|^2)^{-1}$. However, the issue with this is that $\Psi$ is not a distribution (in the usual sense) on $\R^d$. It should be noted that the same situation arises when one considers the solvability of the Laplacian (in dimension 1 and 2). Observe that
\begin{equation}\label{eqn:division_problem_laplacian}
	(-\Delta)u=\delta_0 \iff |\cdot|^2\hat{u}=1.
\end{equation}
In dimension $1$ or $2$ one cannot simply take $\hat{u}=|\cdot|^{-2}$ since it is not a distribution. The way around this is to consider a renormalized version of $|\cdot|^{-2}$. In this note, we utilize a renormalized verion of of $\log(|\cdot|^2)^{-1}$ to obtain fundamental solutions of the logarithmic Laplacian. 

\begin{remark}
We should remind the reader that the theorem of Malgrange and Ehrenpreis doesn't quite apply to the logarithmic Laplacian since the symbol corresponding to the logarithmic Laplacian is not smooth at $0$. Moreover, it should be noted that the logarithmic Laplacian is generally not well-defined on the space of tempered distributions (we leave it as an exercise to the reader to justify to themselves that the logarithmic Laplacian of $1$ or any polynomial is not well-defined). For this reason, we consider an alternative notion of solution which utilizes the space of Lizorkin distributions.
\end{remark}
We introduce some notation and conventions before we consider our main result. 
\subsection{Notation and Conventions}

We use the following standard notation throughout:

\begin{itemize}
	\item $\mathcal{D}(\Omega)$: space of smooth, compactly supported functions on an open set $\Omega\subset \R^d$, \\
	$\mathcal{D}'(\Omega)$: its dual (distributions).
	\item $\mathcal{S}(\R^d)$: Schwartz space of rapidly decreasing smooth functions, \\
	$\mathcal{S}'(\R^d)$: tempered distributions.
	\item $\mathcal{F}$, $\mathcal{F}^{-1}$: Fourier and inverse Fourier transforms, normalized as
	\begin{equation*}
		\mathcal{F}(f)(\xi) = \int_{\R^d} e^{-ix\cdot\xi} f(x)\,dx, \qquad
		\mathcal{F}^{-1}(g)(x) = \frac{1}{(2\pi)^d} \int_{\R^d} e^{ix\cdot\xi} g(\xi)\,d\xi.
		\end{equation*}
	\item $S^{d-1}$: unit sphere in $\R^d$. 
\end{itemize}

All dualities are denoted by $\langle \cdot, \cdot \rangle$ with appropriate subscripts when needed. The Fourier transform is extended to distributions by duality. 

\subsection{Main Result}
Before we state the main result of this note we introduce the notion of solution that we will consider for the fundamental solution of the logarithmic Laplacian utilizing Lizorkin distributions. 

\begin{definition}[Lizorkin Space and Distributions]\label{def:distributions}

We define the Lizorkin space $\mathcal{Z}(\mathbb{R}^d)$ as the subspace of $\mathcal{S}(\mathbb{R}^d)$ given by
	\begin{equation*}
	\mathcal{Z}(\mathbb{R}^d)=\{\varphi\in \mathcal{S}(\mathbb{R}^d):\int_{\mathbb{R}^d}x^{\alpha}\varphi(x)\,dx=0, \text{ for all $\alpha \in \mathbb{N}^{d}_{0}$}\}.
	\end{equation*}
	Also, we denote 
	$\mathcal{Z}'(\mathbb{R}^d)$ as the set of continuous linear functionals on $\mathcal{Z}(\mathbb{R}^d)$. We call the space $\mathcal{Z}(\mathbb{R}^d)$ the Lizorkin space and $\mathcal{Z}'(\mathbb{R}^d)$ as the space of Lizorkin distributions. We also introduce
\begin{equation*}
		\Psi(\R^d):=\{\psi \in \mathcal{S}(\R^d):\partial^\alpha \psi(0)=0, \text{for all $\alpha \in \N^d_0$}\}.
	\end{equation*}
	We denote $\Psi'(\R^d)$ to be the space of linear continuous functionals on $\Psi(\R^d)$.
\end{definition}

\begin{remark}\label{rem:psi}
	As an immedidate consequence of the Fourier transform, we have that $\varphi \in \mathcal{Z}(\R^d)$ if and only if $\hat{\varphi}\in \Psi(\R^d)$. 
\end{remark}
\begin{definition}[Logarithmic Laplacian on the space of Lizorkin distributions]
	\label{def:log_laplacian_liz}
	We define the logarithmic Laplacian on the space of Lizorkin distributions $\log(-\Delta):\mathcal{Z}'(\mathbb{R}^d)\rightarrow\mathcal{Z}'(\mathbb{R}^d)$ as
		\begin{equation}\label{eqn:log_lizorkin}
			\langle \log(-\Delta)u,\varphi \rangle_{\mathcal{Z}'(\mathbb{R}^d),\mathcal{Z}(\mathbb{R}^d)}:=		\langle u,\log(-\Delta)\varphi \rangle_{\mathcal{Z}'(\mathbb{R}^d),\mathcal{Z}(\mathbb{R}^d)}, \quad \text{for all $(u,\varphi) \in (\mathcal{Z}'(\mathbb{R}^d),\mathcal{Z}(\mathbb{R}^d))$}.
		\end{equation}
\end{definition}

The reader might ask why we choose to utilize Lizorkin distributions as opposed to tempered distributions. It should be noted that one cannot define the logarithmic laplacian on the space of tempered distributions. To see this, observe that if one was to naively define 
\begin{equation*}
	\langle\log(-\Delta)u,\varphi\rangle_{\mathcal{S}'(\R^d),\mathcal{S}(\R^d)}:=	\langle u,\log(-\Delta)\varphi\rangle_{\mathcal{S}'(\R^d),\mathcal{S}(\R^d)},\quad \text{for $\varphi \in \mathcal{S}(\R^d)$},
\end{equation*}
one would run into an issue since 
\begin{equation*}
	\log(-\Delta)\varphi\notin \mathcal{S}(\R^d),
\end{equation*}
in general. It should be mentioned that polynomials are the problematic distributions for the logarithmic Laplacian. In turn, it requires for us to consider a notion of solution that allows us to work modulo polnomials. To be more precise, observe that  
	\begin{equation*}
	\langle u+ p,\varphi \rangle_{\mathcal{S}'(\R^d),\mathcal{S}(\R^d)}=        \langle u,\varphi \rangle_{\mathcal{S}'(\R^d),\mathcal{S}(\R^d)}, \quad \text{for all $(u,\varphi) \in \mathcal{S}'(\R^d)\times \mathcal{Z}(\R^d),$}
	\end{equation*}
	for all $p\in \mathcal{P}(\R^d)$. From this, we have that 
	$\mathcal{Z}'(\R^d)$ can be identified with the quotient space $\mathcal{S}'(\R^d)/\mathcal{P}(\R^d)$ and every element in $\mathcal{S}'(\R^d)$ can be identified as an element of $\mathcal{Z}'(\R^d)$ via projection. Every element in $\mathcal{Z}'(\R^d)$ can be uniquely identified (up to a polynomial) to an element in $\mathcal{S}'(\R^d)$. When we wish to emphasize that an element of $\mathcal{Z}'(\R^d)$ is an equivalence class we will use the notation $[u]$ as opposed to $u$. For further details on the Lizorkin space and distributions, we direct the reader to the book of Samko \cite{MR1918790}. 

We now give the notion of solution for the fundamental solution of the logarithmic Laplacian in this note. 

\begin{definition}
\label{def:fundamental_soln_log}
	We say that $E\in \mathcal{Z}'(\mathbb{R}^d)$ is a fundamental solution of the logarithmic Laplacian if it satisfies the following condition:
	\begin{equation}\label{eqn:fundamental_solution_logarithmic_laplacian_def}
		\log(-\Delta)E=\delta_0, \quad \text{in }\mathcal{Z}'(\mathbb{R}^d).
	\end{equation}
	Namely, we have that for all $\varphi \in \mathcal{Z}(\mathbb{R}^d)$, the following holds:
	\begin{equation*}
		\langle \log(-\Delta)E,\varphi \rangle_{\mathcal{Z}'(\mathbb{R}^d),\mathcal{Z}(\mathbb{R}^d)}=\langle \delta_0,\varphi \rangle_{\mathcal{Z}'(\mathbb{R}^d),\mathcal{Z}(\mathbb{R}^d)}=\varphi(0).	
	\end{equation*}

	Moreover, we say that $u\in \mathcal{Z}'(\R^d)$ is harmonic with respect to the logarithmic Laplacian if
	\begin{equation}\label{eqn:harmonic}
		\log(-\Delta)u=0, \quad \text{in }\mathcal{Z}'(\mathbb{R}^d).
	\end{equation}
	Specifically, for all $\varphi \in \mathcal{Z}(\mathbb{R}^d)$ the following holds:
	\begin{equation*}
		\langle \log(-\Delta)u,\varphi \rangle_{\mathcal{Z}'(\mathbb{R}^d),\mathcal{Z}(\mathbb{R}^d)}=0.	
	\end{equation*}

	We take the convention of identifying $[u]\in \mathcal{Z}'(\R^d)$ with the unique representative $u\in [u]$ such that $\hat{u}$ (the Fourier transform of $u$) can be identified as a locally integrable function in a sufficiently small enough neighbourhood around $0$. 
\end{definition}

\begin{remark}\label{rem:unique_extension}
	The last convention might seem a bit strange to the reader but this allows us to avoid a circumstance where polynomials are harmonic with respect to the logarithmic Laplacian. Moreover, this allows us to uniquely associate our solution with an element in $\mathcal{S}'(\R^d)$.
\end{remark}
As we mentioned before, we need to introduce a renormalized variant of $\log(|\cdot|^2)^{-1}$. There is a standard procedure to obtain such a distribution via Hadamard regularization, cf. the book of Sylvie Paycha \cite{MR2987296}. To save the reader some effort, we simply introduce a distribution that, a priori, we guess is a good candidate. 

\begin{definition}
	We define $\hat{E}_{\log}\in \mathcal{S}'(\R^d)$ as $\hat{E}_{\log}:=\hat{E}_{\log}^{1}+\hat{E}_{\log}^{2}$ where 
	\begin{equation*}
	\begin{split}
		\langle \hat{E}_{\log}^{1},\varphi\rangle_{\mathcal{S}'(\R^d),\mathcal{S}(\R^d)}
		&:=\frac{1}{2}\int_{||\xi|-1|<1}\frac{\varphi(\xi)-\varphi(\tfrac{1}{|\xi|}\xi)}{\log|\xi|}\,d\xi,\\
		\langle \hat{E}_{\log}^{2},\varphi\rangle_{\mathcal{S}'(\R^d),\mathcal{S}(\R^d)}
		&:=\frac{1}{2}\int_{|\xi|>2}\frac{\varphi(\xi)-\varphi(\tfrac{1}{|\xi|}\xi)}{\log|\xi|}\,d\xi,
	\end{split}
	\end{equation*}
	for $\varphi \in \mathcal{S}(\R^d)$. 
\end{definition}

\begin{remark}
	A modification of the standard process of Hadamard regularization, to obtain the renormalized variant of $(\log(|\cdot|^2))^{-1}$ is done via the following: 
	
	If $g$ is a function has singular behaviour near $|\xi|=1$ and we have that 
	\begin{equation*}
		\int_{||\xi|-1|>\epsilon}g(\xi)\,d\xi=a\log(\epsilon^{-1})+\sum_{k=1}^{n}b_{k}\epsilon^{-\lambda_{k}}+\Gamma(\epsilon),
	\end{equation*}
	for $a,b_{k},\lambda_{k}\in\mathbb{C}$ such that $\text{Re}(\lambda_{k})>0$ and $\lim_{\epsilon\rightarrow 0^{+}}\Gamma(\epsilon)$ is finite, then we call
	\begin{equation*}
		\mathcal{H}\int_{||\xi|-1|>\epsilon}g(\xi)\,d\xi:=\lim_{\epsilon\rightarrow 0^{+}}\Gamma(\epsilon). 
	\end{equation*}
	From this, we can define 
	\begin{equation*}
		\langle \textrm{P.f.}(\log(|\cdot|^2))^{-1},\varphi\rangle_{\mathcal{S}'(\R^d),\mathcal{S}(\R^d)}:=\mathcal{H}\int_{||\xi|-1|>\epsilon}\frac{\varphi(\xi)}{\log(|\xi|^2)}\,d\xi,
	\end{equation*}
	for $\varphi \in \mathcal{S}(\R^d)$. We choose not to do this since one gets $\hat{E}_{\log}$ with the addition of single layer distributions on the sphere (see Definition \ref{def:distri_sphere}) and it becomes a bit tedious to deal with. As we will see in Theorem \ref{thm:classification_logarithmic_laplacian} the Fourier inverse of these distributions end up being annihilated by the logarithmic Laplacian. 
\end{remark}

In addition to the above, to prove a Liouville theorem for the logarithmic Laplacian we need to introduce the space of distributions on $S^{d-1}$, the sphere in $\R^d$, and the space of single layer distributions on the sphere, cf. \cite[Chapter 6]{MR2000535}.

\begin{definition}\label{def:distri_sphere}\ \\
\textbf{Distributions on the sphere}: Let $\Lambda:\mathbb{R}^d\rightarrow\R$ be a smooth radial function such that $\supp(\Lambda)$ is contained in the open ball of radius 2 and $\Lambda|_{S^{d-1}}=1$. 
	For a measurable function $\tau:S^{d-1}\rightarrow \mathbb{C}$, we denote 
	\begin{equation*}
		(E\tau)(\xi):=\tau\left (\frac{1}{|\xi|}\xi\right )\Lambda(\xi), \quad \text{for $\xi\in \R^d$}. 
	\end{equation*}
	We say that $\tau \in \mathcal{D}(S^{d-1})$, if $E\tau\in \mathcal{D}(\R^d)$. 
	
	If we denote $\{p_{K,m}\}$ to be family of seminorms (on $\mathcal{D}(\R^d)$) defined by 
	\begin{equation*}
			p_{K,m}(\varphi) = \sup_{x \in K,\, |\alpha| \leq m} |\partial^\alpha \varphi(x)|,\quad \text{where $K \subset \R^d$ is compact and $m \in \mathbb{N}_0$}
	\end{equation*}
	then we endow $\mathcal{D}(S^{d-1})$ with the topology generated by the family of seminorms $\{\tilde{p}_{m}\}$ defined by 
	\begin{equation*}
			\tilde{p}_{m}(\tau) = p_{B(0,2),m}(E\tau),\quad \text{for $m\in \N_0$,}
	\end{equation*}
  where $B(0,2)$ is the open ball of radius 2. 
	
	We denote $\mathcal{D}'(S^{d-1})$ (the space of distributions on the sphere) to be the set of continuous linear functionals on $\mathcal{D}(S^{d-1})$.
	\ \\
\textbf{Single layer distributions on the sphere}: We say that $T\in \mathcal{D}'(\R^d)$ is a single layer distribution on the sphere (we denote the set of single layer distributions by $\mathcal{SL}(S^{d-1})$), if there exists a $\tilde{T}\in \mathcal{D}'(S^{d-1})$ such that 
	\begin{equation*}
		\langle T,\varphi\rangle_{\mathcal{D}'(\R^d),\mathcal{D}(\R^d)}=		\langle \tilde{T},\varphi|_{S^{d-1}}\rangle_{\mathcal{D}'(S^{d-1}),\mathcal{D}(S^{d-1})},
	\end{equation*}
	for all $\varphi \in \mathcal{D}(\R^d)$. 
\end{definition}

We now state the first result of this note. 

\begin{theorem}\label{thm:classification_logarithmic_laplacian}
	Let $d\geq 1$.  
	\begin{enumerate}
		\item \textbf{Liouville Theorem}:
			We have that $u$ is harmonic with respect to the logarithmic Laplacian if and only if $\hat{u}\in \mathcal{SL}(S^{d-1})$. 
		\item \textbf{Classification of fundamental solutions}:
		We have that $E$ solves \eqref{eqn:fundamental_solution_logarithmic_laplacian_def} if and only if $\hat{E}=\hat{E}_{\log}+v$ where $v\in \mathcal{SL}(S^{d-1})$. 
	\end{enumerate}
\end{theorem}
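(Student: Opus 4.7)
The plan is to pass to the Fourier side, where $\log(-\Delta)$ acts as multiplication by the symbol $2\log|\xi|$, and analyse the division problem $2\log|\xi|\,\hat u = \hat T$ in $\Psi'(\R^d)$. By Remark \ref{rem:psi} the Fourier transform is a bijection $\mathcal{Z}(\R^d)\to \Psi(\R^d)$; moreover multiplication by $2\log|\xi|$ preserves $\Psi(\R^d)$, since $\psi\in\Psi(\R^d)$ vanishes to infinite order at the origin, absorbing the mild singularity of $\log|\xi|$ there. Dualizing, the equations $\log(-\Delta)u=0$ and $\log(-\Delta)E = \delta_0$ in $\mathcal{Z}'(\R^d)$ become exactly
\begin{equation*}
2\log|\xi|\,\hat u = 0, \qquad 2\log|\xi|\,\hat E = 1 \qquad \text{in } \Psi'(\R^d),
\end{equation*}
so both parts reduce to analysing the kernel and a particular solution of the multiplication operator $M_f$ with $f(\xi) = 2\log|\xi|$.

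For part (1), I first show that $\ker M_f$ consists of distributions in $\Psi'(\R^d)$ with $\supp T \subset S^{d-1}$: if $\psi\in\Psi(\R^d)$ has support disjoint from $S^{d-1}$, then $\psi/(2\log|\xi|) \in \Psi(\R^d)$ as well, so $\langle T,\psi\rangle = \langle T, f\cdot(\psi/f)\rangle = 0$. Under the convention of Definition \ref{def:fundamental_soln_log}, $T$ lifts to a unique $\mathcal{S}'(\R^d)$-representative supported on $S^{d-1}$, and the standard structure theorem for distributions supported on a smooth compact hypersurface gives $T = \sum_{k=0}^N T_k$ with $\langle T_k,\varphi\rangle = \langle \tau_k,(\partial_r^k\varphi)|_{S^{d-1}}\rangle$ for some $\tau_k \in \mathcal{D}'(S^{d-1})$. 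Since $f$ vanishes to exactly first order along $S^{d-1}$ (with $\partial_r(2\log r)|_{r=1} = 2$), a Leibniz expansion yields, for $k\geq 1$,
\begin{equation*}
\partial_r^k\!\big((2\log r)\varphi\big)\big|_{r=1} = 2k\,(\partial_r^{k-1}\varphi)|_{r=1} + (\text{lower-order derivatives of } \varphi\text{ on } S^{d-1}),
\end{equation*}
while the $k=0$ contribution vanishes identically. Consequently the top-order part of $fT$, viewed again as a layer distribution, equals $2N\,\tau_N$; testing against $\varphi$ with prescribed normal derivatives on $S^{d-1}$ forces $\tau_N=0$, and iterating kills every $\tau_k$ with $k\geq 1$. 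Hence $T = T_0 \in \mathcal{SL}(S^{d-1})$. The reverse inclusion $\mathcal{SL}(S^{d-1}) \subset \ker M_f$ is immediate as $f|_{S^{d-1}} = 0$.

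For part (2), I verify that $\hat E_{\log}$ is a particular solution of $M_f \hat E = 1$ in $\Psi'(\R^d)$ by direct substitution. For $\psi \in \Psi(\R^d)$, set $\phi(\xi) = 2\log|\xi|\,\psi(\xi)$; then $\phi \in \Psi(\R^d)$ and $\phi(\xi/|\xi|)=0$ because $\log 1 = 0$, so
\begin{equation*}
\langle \hat E_{\log},\phi\rangle = \frac{1}{2}\int_{||\xi|-1|<1}\!\frac{2\log|\xi|\,\psi(\xi)}{\log|\xi|}\,d\xi + \frac{1}{2}\int_{|\xi|>2}\!\frac{2\log|\xi|\,\psi(\xi)}{\log|\xi|}\,d\xi = \int_{\R^d}\psi(\xi)\,d\xi = \langle 1,\psi\rangle,
\end{equation*}
since $\{0<|\xi|<2\}\cup\{|\xi|>2\}$ exhausts $\R^d$ up to a null set and $\psi$ is integrable. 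Hence $M_f\hat E_{\log} = 1$, and $E_{\log}$ is a fundamental solution. By linearity, any other fundamental solution $E$ satisfies $M_f(\hat E - \hat E_{\log}) = 0$, so part (1) identifies the difference with some $v\in\mathcal{SL}(S^{d-1})$; conversely any such modification still solves $M_f\hat E = 1$, completing the classification. The main obstacle is the hypersurface kernel computation in part (1): while the structure theorem for distributions supported on a smooth hypersurface is classical, tracking how $M_f$ decreases the layer order by exactly one (because $\log|\xi|$ vanishes to precisely first order on $S^{d-1}$) and then inductively eliminating all layers of order $\geq 1$ requires the most care; the particular-solution verification in part (2) is by contrast a direct cancellation once the Fourier-side formulation is in place.
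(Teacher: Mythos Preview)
Your proof is correct and follows the paper's overall architecture: pass to the Fourier side, reduce to the division problem $2\log|\xi|\,\hat u=0$ (resp.\ $=1$) in $\Psi'(\R^d)$, show $\supp\hat u\subset S^{d-1}$, prove $\hat u$ is a single layer, and verify $\hat E_{\log}$ is a particular solution by direct cancellation. Your support argument and your particular-solution computation are essentially identical to the paper's.

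The one place where the routes differ is the ``single-layer'' step. The paper argues constructively: for any $\psi\in\mathcal{D}(\R^d\setminus\{0\})$ with $\psi|_{S^{d-1}}=\tau$, it writes
\[
\psi-E\tau=\log(|\xi|^2)\cdot\Big(\tfrac12\Lambda(\xi)\,\frac{\psi(\xi)-\psi(\xi/|\xi|)}{|\xi|-1}\cdot\frac{|\xi|-1}{\log|\xi|}\Big),
\]
and checks by a Taylor expansion and the removable singularity of $(z-1)/\mathrm{Log}(z)$ at $z=1$ that the bracketed factor is smooth with compact support; then $\langle\hat u,\psi-E\tau\rangle=0$ directly from the hypothesis. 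You instead invoke the structure theorem for distributions supported on a compact hypersurface and run a Leibniz induction on the normal-derivative order, using that $2\log r$ vanishes to exactly first order at $r=1$. Both are standard and valid; your approach is cleaner to state and generalizes immediately to any symbol with a simple zero along a smooth hypersurface, while the paper's is self-contained and avoids quoting the structure theorem. One small point worth making explicit in your write-up: after lifting via the convention of Definition~\ref{def:fundamental_soln_log}, the annihilation $\langle\hat u,\psi\rangle=0$ initially gives only $\supp\hat u\subset S^{d-1}\cup\{0\}$; the local integrability of $\hat u$ near $0$ is what excludes a component at the origin.
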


\begin{remark}
	An alternative way of stating the Liouville theorem for the above is that $u$ is harmonic with respect to the logarithmic Laplacian if and only if it is a (generalized) eigenfunction (with eigenvalue 1) of the Laplacian. Namely, 
	\begin{equation*}
		\log(-\Delta)u=0, \quad \iff \quad (-\Delta)u=u,
	\end{equation*}
	in $\mathcal{Z}'(\R^d)$. Given the assumption that we consider distributions which are locally integrable functions near $0$ (in frequency space), it becomes clear that $u$ is harmonic with respect to the logarithmic Laplacian if and only if $u$ is of the form 
	\begin{equation*}
	u(x)=\langle T,e_{x}\rangle_{\mathcal{D}'(S^{d-1}),\mathcal{D}(S^{d-1})},\quad \text{for $x\in \R^d$ and some $T\in \mathcal{D}'(S^{d-1})$.}
	\end{equation*}
  See also \cite[Theorem 4.1]{MR1730501}. 
\end{remark}

\begin{remark}
One might suspect that it is sufficient to prove the Liouville theorem by showing that if $u$ is harmonic with respect to the logarithmic Laplacian then showing that $\supp(\hat{u})\subset S^{d-1}$ would be sufficient. Unfortunately this isn't the case. For instance, one can easily verify that if $\hat{u}$ is the radial derivative of the surface measure of the sphere then it cannot be the case that $u$ is harmonic with respect to the logarithmic laplacian or is a generalized eigenfunction of the Laplacian despite the fact that its support is contained in the sphere. 
\end{remark}

The above result gives a comprehensive classification of all the fundamental solutions of the logarithmic Laplacian. However, it should be noted that the proposed solution of Chen and V\'eron satisfied some decay estimates. Namely, it was a solution with desirable properties, akin to the Sommerfield radiation condtion. It was conjectured by Chen and V\'eron that such solutions do exist. Here we present a modified version of the conjecture made by Chen and V\'eron.

\begin{conjecture}\cite{MR4741033}\label{conjecture:chen_veron}
	In dimension $1$ and $2$, we conjecture that the fundamental solution of the logarithmic Laplacian exists, is a (radial) locally integrally function, and satisfies the following bound:
	\begin{equation}\label{eqn:conj_bdd}
		|E(x)|\lesssim |x|^{-\left (\frac{d-1}{2}\right )}, \quad \text{for $|x|\geq 2$.}
	\end{equation}
\end{conjecture}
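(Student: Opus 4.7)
The plan is to use the classification from Theorem~\ref{thm:classification_logarithmic_laplacian}(2) as the starting point: every fundamental solution has the form $E = \mathcal{F}^{-1}(\hat{E}_{\log} + v)$ for some $v \in \mathcal{SL}(S^{d-1})$. Since Fourier inverses of single-layer distributions on $S^{d-1}$ are smooth bounded functions whose decay is governed by stationary phase on the sphere, I would first estimate $\mathcal{F}^{-1}(\hat{E}_{\log})$ directly and then, if needed, select $v$ to cancel any contribution exceeding the target $|x|^{-(d-1)/2}$. Radiality is immediate from the radial structure of $\hat{E}_{\log}$ and any radial choice of $v$; local integrability near the origin reduces to decay at infinity, since $\hat{E}_{\log}$ has only a removable logarithmic singularity on the sphere and no mass at infinity faster than $1/\log|\xi|$.

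Decompose $\mathcal{F}^{-1}(\hat{E}_{\log}) = \mathcal{F}^{-1}(\hat{E}_{\log}^1) + \mathcal{F}^{-1}(\hat{E}_{\log}^2)$. The far-field piece has amplitude $1/\log|\xi|$ which is $C^\infty$ on $|\xi|>2$ with slowly decaying derivatives, so repeated integration by parts in the inverse Fourier integral yields $|\mathcal{F}^{-1}(\hat{E}_{\log}^2)(x)| = O(|x|^{-N})$ for every $N \ge 0$; the losses of $1/\log|\xi|$ per derivative are harmless. For the near-sphere piece, passing to polar coordinates $\xi = r\omega$ and using the surface-measure Fourier transform $\widehat{\sigma}(y) = \int_{S^{d-1}} e^{iy\cdot\omega}\,d\omega$ yields a representation of the form
\begin{equation*}
\mathcal{F}^{-1}(\hat{E}_{\log}^1)(x) = c_d \int_0^2 \frac{\widehat{\sigma}(rx) - \widehat{\sigma}(x)}{\log r}\, r^{d-1}\, dr.
\end{equation*}
The stationary-phase asymptotic $\widehat{\sigma}(y) = A_d|y|^{-(d-1)/2}\cos(|y|-\theta_d) + O(|y|^{-(d+1)/2})$ for $d \ge 2$ reduces the question to bounding a one-dimensional oscillatory integral in $r$ against the singular weight $1/\log r$ near $r=1$.

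I would split the $r$-integral at $|r-1| \simeq |x|^{-1}$. In the inner region, the first-order cancellation built into the numerator gives an integrand of size $O(|x|)$ against an interval of length $|x|^{-1}$, producing an $O(1)$ contribution. In the outer region, integrating by parts in $r$ transfers the $\partial_r$ from the oscillation $e^{\pm i r|x|}$ onto the slowly singular amplitude; the boundary terms at $r = 1 \pm |x|^{-1}$ and the resulting interior integral are each $O(1)$ after a dyadic decomposition $|r-1| \simeq 2^{-k}$. Multiplying by the prefactor $|x|^{-(d-1)/2}$ from stationary phase should then yield the conjectured bound. Should the analysis leave behind an unwanted $\log|x|$ factor (as it plausibly will, from the boundary terms of the integration by parts), I would remove it by choosing $v = \lambda_d\, \sigma$ a suitable scalar multiple of the uniform surface measure on $S^{d-1}$: this adds a controllable multiple of $\widehat{\sigma}(x)$ to $E$ whose leading oscillation can be tuned to cancel the offending term.

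The hard part will be handling the non-algebraic singularity $1/\log r$ at $r=1$, since classical van der Corput and stationary-phase lemmas presume $L^1$ amplitudes; the combined use of the first-order subtraction, the oscillation, and the dyadic bookkeeping must be carried out so that the constants are uniform in $|x|$. A separate subtlety arises in dimension $d=1$, where $S^0 = \{-1,1\}$ and $\widehat{\sigma}(y) = 2\cos y$ does not decay, so the target bound $|E(x)| \lesssim 1$ must come entirely from cancellation in the $r$-integral; this case may admit explicit evaluation in terms of the cosine integral function and should be treated separately as confirmation that the plan above is consistent with the conjectured exponent in the degenerate case.
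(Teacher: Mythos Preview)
First, note that the paper does \emph{not} prove Conjecture~\ref{conjecture:chen_veron}; it establishes only the weaker Theorem~\ref{thm:main_2}, giving a fundamental solution that is locally integrable on $\{|x|\ge 2\}$ and satisfies \eqref{eqn:conj_bdd} there, while explicitly leaving open local integrability near the origin (see the closing remark of Section~\ref{sec:prop_soln}). Your assertion that local integrability near $0$ ``reduces to decay at infinity, since $\hat{E}_{\log}$ has \ldots no mass at infinity faster than $1/\log|\xi|$'' is precisely the unresolved obstruction: decay of order $1/\log|\xi|$ on the Fourier side is far too slow to force the inverse transform to be locally integrable, and the paper cannot identify its term $G^2$ as a regular distribution near $0$ for exactly this reason. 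So on the full conjecture your proposal has a genuine gap at the same point the paper does, but you have not flagged it as one.

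Second, on the part the paper \emph{does} prove, your route is genuinely different and, in fact, is the one the paper explicitly abandoned: ``The author hasn't been able to succeed with this approach since one has to bound highly oscillatory integrals.'' Instead of attacking $\mathcal{F}^{-1}\hat{E}_{\log}$ directly, the paper compares with the Helmholtz fundamental solution, writing (modulo a single-layer term) $\hat{E}=\hat{\Phi}+\hat{E}^1_{\text{rem}}+\hat{E}^2_{\text{rem}}$, where the remainders carry the amplitude $\tfrac{1}{\log(|\xi|^2)}-\tfrac{1}{|\xi|^2-1}$, which is \emph{continuous across} $|\xi|=1$. This kills the non-integrable $1/\log r$ singularity at $r=1$ that your dyadic splitting must confront, and the known decay of $\Phi$ then supplies \eqref{eqn:conj_bdd} for free. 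Your direct oscillatory-integral scheme may be salvageable, but the paper's comparison trick sidesteps the hardest estimate entirely.

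Two smaller technical errors in your sketch: the claim $\mathcal{F}^{-1}(\hat{E}_{\log}^2)(x)=O(|x|^{-N})$ for every $N$ is false, because $\hat{E}_{\log}^2$ has a hard cutoff at $|\xi|=2$ and each integration by parts leaves a boundary term on that sphere decaying only like $|x|^{-(d-1)/2}$ (times the accumulated $|x|^{-k}$); the paper's explicit computation of $G^1_N,G^2_N$ exhibits these terms. And your fallback of choosing $v=\lambda_d\sigma$ to cancel a stray $\log|x|$ is optimistic: $\widehat{\sigma}(x)$ oscillates with a fixed phase $\cos(|x|-\theta_d)$, so it cannot absorb a term of size $|x|^{-(d-1)/2}\log|x|$ arising from the $r$-integral unless that term happens to share the same phase, which you have not argued.
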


With this in mind, we state the other main result of this note. 

\begin{theorem}\label{thm:main_2}
	In dimensions $1$ and $2$, we have that there exists a fundamental solution to the logarithmic Laplacian such that it can be identified as a (radial) locally integrable function on $\{|x|\geq 2\}$ such that \eqref{eqn:conj_bdd} is satisfied. 
\end{theorem}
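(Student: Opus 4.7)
The plan is to exploit the classification in Theorem \ref{thm:classification_logarithmic_laplacian}(2): any fundamental solution $E$ of the logarithmic Laplacian has Fourier transform $\hat{E}=\hat{E}_{\log}+v$ with $v\in\mathcal{SL}(S^{d-1})$. To obtain a radial $E$, I look for a radial $v$, which (up to scalar) is the surface measure $\mu$ on $S^{d-1}$ in $d=2$, and $\delta_{+1}+\delta_{-1}$ in $d=1$. So it suffices to analyse the large-$|x|$ behaviour of $\mathcal{F}^{-1}(\hat{E}_{\log})$ and, if necessary, choose a constant $c$ so that $E:=\mathcal{F}^{-1}(\hat{E}_{\log}+c\mu)$ satisfies $|E(x)|\lesssim |x|^{-(d-1)/2}$ on $\{|x|\geq 2\}$.

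The first step is to split $\hat{E}_{\log}$ via a smooth radial cutoff $\chi$ equal to $1$ on a small neighbourhood of $S^{d-1}$ and supported in $\{\tfrac{1}{2}\leq|\xi|\leq\tfrac{3}{2}\}$: write $\hat{E}_{\log}=\chi\hat{E}_{\log}+(1-\chi)\hat{E}_{\log}$. The second summand is a locally integrable radial function, smooth away from $0$ (the behaviour at infinity being controlled by the renormalisation built into $\hat{E}_{\log}^{2}$). Using the Fourier--Bessel representation for a radial distribution $g$,
\begin{equation*}
\mathcal{F}^{-1}(g)(x)=c_{d}\,|x|^{-(d-2)/2}\int_{0}^{\infty}g(r)\,J_{(d-2)/2}(r|x|)\,r^{d/2}\,dr,
\end{equation*}
together with repeated integration by parts exploiting the oscillation of $J_{(d-2)/2}$ against the smooth non-oscillatory factor $(1-\chi(r))/\log r$, produces a contribution that decays strictly faster than $|x|^{-(d-1)/2}$ for $|x|\geq 2$.

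The main contribution therefore comes from the compactly supported singular piece $\chi\hat{E}_{\log}$, a principal-value-type distribution concentrated on the unit sphere. Inserting the Hankel asymptotic $J_{(d-2)/2}(r|x|)\sim\sqrt{2/(\pi r|x|)}\cos(r|x|-\alpha_{d})$ into the Fourier--Bessel integral and performing the substitution $r=1+s/|x|$, the leading contribution reduces to a one-dimensional principal value integral of the form
\begin{equation*}
c_{d}'\,|x|^{-(d-1)/2}\,\mathrm{P.V.}\!\int\frac{\cos(|x|+s-\alpha_{d})}{s}\,ds=c_{d}''\,|x|^{-(d-1)/2}\sin(\alpha_{d}-|x|),
\end{equation*}
which already satisfies the target bound. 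Lower-order terms in the Hankel expansion and in the Jacobian factor $(1+s/|x|)^{d-1}$ contribute strictly smaller errors. Adding $c\mu$ introduces a term of the form $c\,c_{d}|x|^{-(d-2)/2}J_{(d-2)/2}(|x|)$, which is of the same order; choosing $c$ appropriately (likely $c=0$ suffices) closes the argument.

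The main obstacle is making the oscillatory-integral expansion rigorous: in particular, justifying the interchange of the principal value limit at $|\xi|=1$ with the Hankel asymptotic expansion, and uniformly controlling the remainder from replacing $J_{(d-2)/2}((1+s/|x|)|x|)$ by its leading term across the PV truncation. In the dimensions $d=1,2$ of interest this is tractable thanks to dimension-specific simplifications: in $d=1$ the ``sphere'' $S^{0}=\{\pm 1\}$ reduces the computation to a one-dimensional Hilbert-transform-type estimate applied to $1/\log r$, while in $d=2$ the explicit identity $\mathcal{F}^{-1}(\mu)(x)=\tfrac{1}{2\pi}J_{0}(|x|)$ together with the standard asymptotic expansion of $J_{0}$ allows the bound to be verified by direct estimation.
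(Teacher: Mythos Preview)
Your approach is genuinely different from the paper's, and the paper in fact explicitly comments on it. The paper writes: ``One potential strategy to prove Theorem~\ref{thm:main_2} is to directly take the Fourier inverse of $\hat{E}_{\log}$. This may work but this would require tricky manipulations. The author hasn't been able to succeed with this approach since one has to bound highly oscillatory integrals.'' That is precisely your strategy: split $\hat{E}_{\log}$ by a cutoff $\chi$ near $S^{d-1}$ and analyse the compactly supported principal-value piece by a stationary-phase / Hankel-expansion argument. The paper instead compares $\hat{E}_{\log}$ with the symbol $(|\xi|^{2}-1)^{-1}$ of the Helmholtz operator: it shows there is a fundamental solution of the form $E=\Phi+E_{\mathrm{rem}}^{1}+E_{\mathrm{rem}}^{2}$, where $\Phi$ is the explicit Helmholtz fundamental solution (whose $|x|^{-(d-1)/2}$ decay is classical), $\hat{E}_{\mathrm{rem}}^{1}$ is the \emph{continuous} compactly supported function $\bigl(\tfrac{1}{\log(|\xi|^{2})}-\tfrac{1}{|\xi|^{2}-1}\bigr)\mathds{1}_{||\xi|-1|<1}$, and $\hat{E}_{\mathrm{rem}}^{2}$ is the corresponding difference on $\{|\xi|>2\}$. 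The entire PV singularity at $|\xi|=1$ is thereby absorbed into $\Phi$, and all that remains is standard Bessel-function integration by parts against smooth, slowly varying symbols. This buys the paper a proof that avoids the oscillatory PV analysis altogether.

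Your proposal has a real gap exactly at the step the paper could not close. For the piece $\chi\hat{E}_{\log}$ you write down the leading-order computation (substitution $r=1+s/|x|$, Hankel asymptotics, one-dimensional PV integral) and assert that the remainders are ``strictly smaller'' and that in $d=1,2$ the whole thing is ``tractable.'' But none of this is carried out. Concretely: the Hankel asymptotic for $J_{(d-2)/2}((1+s/|x|)|x|)$ has an error of size $O(|x|^{-3/2})$ pointwise, but after dividing by $\log(1+s/|x|)\sim s/|x|$ and integrating in $s$ over an interval of length $\sim|x|$, you must show the resulting contribution is still $O(|x|^{-(d-1)/2})$; this is a genuine uniform estimate across the PV truncation, not an automatic consequence of the leading-order calculation. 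Similarly, the interchange of the PV limit with the asymptotic expansion has to be justified, and your proposal only names this as ``the main obstacle'' without resolving it. Given that the paper's author explicitly reports failing to make this direct route work, you would need to actually supply these estimates, or else adopt the Helmholtz-comparison trick, which eliminates the difficulty cleanly.
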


It should be noted that we haven't quite resolved Conjecture \ref{conjecture:chen_veron} since \textbf{we don't know if our fundamental solution is locally integrable near zero}. Nevertheless, we can still identify that our proposed solution can be identified with a locally integrable function away from zero which also satisfies \eqref{eqn:conj_bdd}. 
 
\begin{remark}
	It should be noted that the condition of local integrability wasn't originally made in the conjecture of Chen and V\'eron but we suspect that it is implied.
\end{remark}

\subsection{Outline}

We briefly outline the rest of this note. Section \ref{sec:derive_fundamental} is dedicated to the proof of Theorem~\ref{thm:classification_logarithmic_laplacian}. Section~\ref{sec:prop_soln} is dedicated to the proof of Theorem~\ref{thm:main_2}. Moreover, in Appendix \ref{appendix:log_lizorkin}, we show the reader the logarithmic Laplacian is well-defined on the space of Lizorkin distributions. 

\subsection{Acknowledgements}

The author is grateful for Tobias Weth for introducing him the article of Chen and V\'eron. The author also wants to give particular thanks to Ben Goldys for identifying an error in an earlier draft of this note.

The author also wants to thank Anthony Dooley and Huyuan Chen for helpful discussions. 

\section{Proof of Theorem~\ref{thm:classification_logarithmic_laplacian}}
\label{sec:derive_fundamental}

Let us briefly outline the proof for Theorem \ref{thm:classification_logarithmic_laplacian}. Via the Fourier transform, it is sufficient show:\\
\textbf{Existence of solution}: $\hat{E}_{\log}$ solves $\log(|\cdot|^2)\hat{E}=1$ in $\Psi'(\R^d)$,\\ 
	\textbf{Liouville theorem}: classify all solutions of $\log(|\cdot|^2)\hat{u}=0$ in $\Psi'(\R^d)$ which is equivalent to proving the first part of Theorem \ref{thm:classification_logarithmic_laplacian}. To do this, we do this in several steps:

	\paragraph{Step 1.} Show that $\supp(\hat{u})\subset S^{d-1}$. 
	\paragraph{Step 2.} Show that $\hat{u}$ is necessarily a single-layer distribution, see Definition \ref{def:distri_sphere}. To do this, it will be sufficient to show that 
	\begin{equation}\label{eqn:single_layer}
		\langle \hat{u},\varphi\rangle_{\mathcal{D}'(\R^d),\mathcal{D}(\R^d)}=	\langle \hat{u},E\tau\rangle_{\mathcal{D}'(\R^d),\mathcal{D}(\R^d)},
	\end{equation} 
	for all $\varphi\in \mathcal{D}(\R^d\setminus\{0\})$ such that $\varphi|_{S^{d-1}}=\tau$ (recall the definition of $E$ in Definition \ref{def:distri_sphere}). The reason why this is sufficient is that the distribution
	\begin{equation*}
		\langle T,\tau\rangle_{\mathcal{D}'(S^{d-1}),\mathcal{D}(S^{d-1})}:=		\langle \hat{u},E\tau\rangle_{\mathcal{D}'(\R^d),\mathcal{D}(\R^d)},\quad \text{for $\tau \in \mathcal{D}(S^{d-1})$},
	\end{equation*}
	is continuous on $\mathcal{D}(S^{d-1})$. To see this, if $\tau_n\rightarrow \tau\in \mathcal{D}(S^{d-1})$ then 
	\begin{equation*}
	\begin{split}
		&\langle T,\tau_n\rangle_{\mathcal{D}'(S^{d-1}),\mathcal{D}(S^{d-1})}=\langle \hat{u},E\tau_n\rangle_{\mathcal{D}'(\R^d),\mathcal{D}(\R^d)}\\
		&\quad \rightarrow \langle \hat{u},E\tau\rangle_{\mathcal{D}'(\R^d),\mathcal{D}(\R^d)}=\langle T,\tau\rangle_{\mathcal{D}'(S^{d-1}),\mathcal{D}(S^{d-1})},
	\end{split} 
	\end{equation*}
	hence $T$ is continuous on $\mathcal{D}(S^{d-1})$. 
	\paragraph{Step 3.} We verify that if $\hat{u}$ is a single layer distribution on the sphere (corresponding to $T\in \mathcal{D}'(S^{d-1})$) then 
	$\log(-\Delta)u=0$ in $\mathcal{Z}'(\R^d)$. 

We now proceed with the proofs. 

\begin{proof}[Proof of Existence]
	By direct computation, we have that 
	\begin{equation*}
		\begin{split}
			&\langle \log(|\cdot|^2)\hat{F}_{\log},\psi \rangle_{\Psi'(\R^d),\Psi(\R^d)}\\
			&=\langle \hat{F}_{\log},\log(|\cdot|^2)\psi \rangle_{\Psi'(\R^d),\Psi(\R^d)},\\
			&=\frac{1}{2}\int_{||\xi|-1|<1}\left (\frac{1}{\log|\xi|}\right )\log(|\xi|^2)\psi(\xi)\,d\xi+\frac{1}{2}\int_{||\xi|-1|>1}\frac{\log(|\xi|^2)\psi(\xi)}{\log(|\xi|)}\,d\xi,\\
			&=\int_{\R^d}\psi(\xi)\,d\xi=\langle 1,\psi\rangle_{\Psi'(\R^d),\Psi(\R^d)}, 
		\end{split}
	\end{equation*}
	for all $\psi \in \Psi(\R^d)$.
\end{proof}

\begin{proof}[Proof of the Liouville theorem]
We proceed by following the steps from before. 
\begin{enumerate}
	
	\item[Step 1.]

We show that $\supp(\hat{u})\subset S^{d-1}$. To prove this, it will be sufficent to show that 
\begin{equation*}
	\langle \hat{u},\psi\rangle_{\mathcal{D}'(\R^d),\mathcal{D}(\R^d)}=0,
\end{equation*}
for $\psi \in \mathcal{D}(\R^d\setminus(S^{d-1}\cup \{0\}))$. 
Note that 
\begin{equation*}
	\psi=\log(|\cdot|^2)\left (\frac{1}{\log(|\cdot|^2)}\psi\right ),
\end{equation*}
and $\tilde{\psi}:=\frac{1}{\log(|\cdot|^2)}\psi$ also belongs to $\mathcal{D}(\R^d\setminus(S^{d-1}\cup \{0\}))$. Hence, since 
\begin{equation*}
\langle \hat{u},\log(|\cdot|^2)\psi\rangle_{\mathcal{S}'(\R^d),\mathcal{S}(\R^d)}=0, \quad \text{for all $\psi \in \Psi(\R^d)$}, 
\end{equation*}
we have that $\supp(\hat{u})\subset S^{d-1}$. 
\item[Step 2.] 
Let $\psi\in \mathcal{D}(\R^d\setminus\{0\})$ such that $\psi|_{S^{d-1}}=\tau$. Note that, 

\begin{equation*}
\begin{split}
	\psi(\xi)-(E\tau)(\xi)&=\varphi(\xi)-\Lambda(\xi)\psi\left ( \frac{1}{|\xi|}\xi\right)\\
	&=(1-\Lambda(\xi))\psi(\xi)+\Lambda(\xi)\left (\psi(\xi)-\psi\left ( \frac{1}{|\xi|}\xi\right)\right ), \quad \text{for $\xi\in \R^d$}. 
\end{split}
\end{equation*} 
Given that $\supp(\hat{u})\subset S^{d-1}$, we have that 
\begin{equation*}
	\langle\hat{u},(1-\Lambda)\psi\rangle_{\mathcal{D}'(\R^d),\mathcal{D}(\R^d)}=0.
\end{equation*}
We show that 
\begin{equation*}
	\langle\hat{u},\tilde{\psi}\rangle_{\mathcal{D}'(\R^d),\mathcal{D}(\R^d)}=0,
\end{equation*}
where
\begin{equation*}
\tilde{\psi}(\xi):=\Lambda(\xi)\left (\psi(\xi)-\psi\left ( \frac{1}{|\xi|}\xi\right)\right ), \quad \text{for $x\in \R^d$}. 
\end{equation*}
Note that 
\begin{equation*}
\begin{split}
	\tilde{\psi}(\xi)&:=\Lambda(\xi)\left (\psi(\xi)-\psi\left ( \frac{1}{|\xi|}\xi\right)\right )\\
	&=\log(|\xi|^2)\Lambda(\xi)\left (\frac{\psi(\xi)-\psi\left ( \frac{1}{|\xi|}\xi\right)}{\log(|\xi|^2)}\right ),\\
	&=\frac{1}{2}\log(|\xi|^2)\Lambda(\xi)\left (\frac{\psi(\xi)-\psi\left ( \frac{1}{|\xi|}\xi\right)}{|\xi|-1}\right )\left (\frac{|\xi|-1}{\log(|\xi|)}\right ).
\end{split}
\end{equation*}
Since $\supp(\Lambda)$ contains an open neighbourhood of $S^{d-1}$ (but not the point $0$), it is sufficient to show that 
\begin{equation*}
	\left (\frac{\psi(\xi)-\psi\left ( \frac{1}{|\xi|}\xi\right)}{|\xi|-1}\right ), \quad \frac{|\xi|-1}{\log(|\xi|)}
\end{equation*}
are smooth (alternatively have smooth extensions) for $A_{\epsilon}:=\{\xi\in \R^d:\epsilon<|\xi|<2\}$ for some sufficently small $\epsilon\in (0,1)$. 

 We show that $\xi\mapsto \frac{\psi(\xi)-\psi\left ( \frac{1}{|\xi|}\xi\right)}{|\xi|-1}$ is smooth on $A_{\epsilon} \setminus \{|\xi|=1\}$. By the Taylor theorem, we have that 
	\begin{equation*}
	\begin{split}
		\psi(\xi)-\psi\left ( \frac{1}{|\xi|}\xi\right )&=\left (\xi-\frac{1}{|\xi|}\xi\right )\cdot \int_0^1\nabla \psi \left( \frac{1}{|\xi|}\xi+(1-t)\left (\xi-\frac{1}{|\xi|}\xi\right )\right )\,dt,\\
		\implies \frac{\psi(\xi)-\psi\left ( \frac{1}{|\xi|}\xi\right )}{|\xi|-1}&=\frac{1}{|\xi|}\xi\cdot \int_0^1\nabla \psi \left( \frac{1}{|\xi|}\xi+(1-t)\left (\xi-\frac{1}{|\xi|}\xi\right )\right )\,dt. 
	\end{split}
	\end{equation*} 
	From this, it is clear that $x\in A_{\epsilon}\setminus\{|\xi|=1\}\mapsto \frac{\psi(\xi)-\psi\left ( \frac{1}{|\xi|}\xi\right )}{|\xi|-1}$ 
	has a smooth extension to $A_{\epsilon}$. 

	We now show that $\xi\in A_{\epsilon}\setminus\{|\xi|=1\}\mapsto \frac{|\xi|-1}{\log(|\xi|)}$ has a smooth extension to $A_\epsilon$. We do this by considering the complex valued function 
	\begin{equation*}
		\omega(z):=\frac{z-1}{\text{Log}(z)}, \quad \text{for $z\in \mathbb{C}\setminus ((-\infty,0]\cup\{1\})$},
	\end{equation*}
    where $\text{Log}$ is the principal branch of the complex logarithm.
	It is clear that $\omega$ is complex differentiable (hence holomorphic) on $\mathbb{C}\setminus ((-\infty,0]\cup\{1\})$. Moreover, since 
	\begin{equation*}
		\lim_{z\rightarrow 1}(z-1)\omega(z)=0,
	\end{equation*}
	we have that $\omega$ has a holomorphic extension to the point $1$. Since $\omega$, the extension, is holomorphic we have that 
	\begin{equation*}
		\xi\in A_{\epsilon}\mapsto \omega(|\xi|)=\frac{|\xi|-1}{\log(|\xi|)},
	\end{equation*} 
	is smooth. 

To summarize, we have that 
\begin{equation*}
	\tilde{\psi}(\xi)
	=\frac{1}{2}\log(|\xi|^2)\underset{\textnormal{smooth with compact support}}{\Lambda(\xi)}
		\underset{\textnormal{smooth}}{\left (\frac{\psi(\xi)-\psi\left ( \frac{1}{|\xi|}\xi\right)}{|\xi|-1}\right )}
		\underset{\textnormal{smooth}}{\left (\frac{|\xi|-1}{\log(|\xi|)}\right )},
\end{equation*}
hence, since 
 	\begin{equation*}
\langle \hat{u},\log(|\cdot|^2)\psi\rangle_{\mathcal{S}'(\R^d),\mathcal{S}(\R^d)}=0, \quad \text{for all $\psi \in \Psi(\R^d)$}. 
\end{equation*}
we must have that 
\begin{equation*}
\langle \hat{u},\tilde{\psi}\rangle_{\mathcal{D}'(\R^d),\mathcal{D}(\R^d)}=0,
\end{equation*}
which shows that 
\begin{equation*}
	\langle \hat{u},\psi\rangle_{\mathcal{D}'(\R^d),\mathcal{D}(\R^d)}= 	\langle \hat{u},E\tau\rangle_{\mathcal{D}'(\R^d),\mathcal{D}(\R^d)},
\end{equation*}
for all $\psi\in \mathcal{D}(\R^d\setminus\{0\})$ such that $\psi|_{S^{d-1}}=\tau$. Hence, we must have that $\hat{u}$ is necessarily a single-layer distribution. 
\item[Step 3.] It is sufficient for us to show that 
\begin{equation*}
	\langle \hat{u},\log(|\cdot|^2)\psi\rangle_{\mathcal{D}'(\R^d),\mathcal{D}(\R^d)}=0
\end{equation*}
for all $\psi \in \Psi(\R^d)$ with compact support since $\supp(\hat{u})$ is compact. Since we assume that $\hat{u}$ is a single layer distribution, we must have 
\begin{equation*}
	\langle\hat{u},\log(|\cdot|^2)\psi\rangle_{\mathcal{D}'(\R^d),\mathcal{D}(\R^d)}=\langle T,\log(|\cdot|^2)\psi|_{S^{d-1}}\rangle_{\mathcal{D}'(S^{d-1}),\mathcal{D}(S^{d-1})}=0,
\end{equation*}
since $\log(|\cdot|^2)\psi|_{S^{d-1}}=0$. In turn, we have that $u$ solves $\log(-\Delta)u=0$ in $\mathcal{Z}'(\R^d)$. 

\end{enumerate}
\end{proof}
\section{Proof of Theorem \ref{thm:main_2}}
\label{sec:prop_soln}

One potential strategy to prove Theorem \ref{thm:main_2} is to directly take the Fourier inverse of $\hat{E}_{\log}$. This may work but this would require tricky manipulations. The author hasn't been able to succeed with this approach since one has to bound highly oscillatory integrals. 

The strategy that we employ is, based on Theorem \ref{thm:classification_logarithmic_laplacian}, is that we choose an appropriate element in $\mathcal{SL}(S^{d-1})$ that allows us to compare with the classical Helmholtz solution. This greatly simplifies things and allows us to remove certain oscillatory integrals. Let us now summarize this. 

\paragraph{Step 1.}

We show that there exists a fundamental solution of the logarithmic Laplacian of the form 
\begin{equation*}
	E=\Phi+E_{\text{rem}}^{1}+E_{\text{rem}}^{2}, 
\end{equation*}
where 
        \begin{equation*}
            \begin{split}
                \langle \hat{E}^1_{\text{rem}},\varphi\rangle_{\mathcal{S}'(\R^d),\mathcal{S}(\R^d)}&:=\int_{||\xi|-1|<1}\left (\frac{1}{\log(|\xi|^2)}-\frac{1}{|\xi|^2-1}\right )\varphi(\xi)\,d\xi,\\
                \langle \hat{E}^2_{\text{rem}},\varphi\rangle_{\mathcal{S}'(\R^d),\mathcal{S}(\R^d)}&:=\int_{|\xi|>2}\left (\frac{1}{\log(|\xi|^2)}-\frac{1}{|\xi|^2-1}\right )\varphi(\xi)\,d\xi,
            \end{split}
        \end{equation*}
        for $\varphi \in\mathcal{S}(\R^d)$, and 
\begin{equation}\label{eqn:helmholtz_soln}
        \Phi(x)=
        \begin{cases}
            \displaystyle\frac{-i}{2}e^{i|x|}, \quad &\text{for $d=1$ and $x\in \R$, }\\
            \displaystyle\frac{(2\pi)^{-(d-2)/2}}{4i}\frac{H^{(1)}_{(d-2)/2}(|x|)}{|x|^{\tfrac{d-2}{2}}}, \quad &\text{for $d>1$ and $x\in \R^{d}\setminus \{0\}$.}
        \end{cases}
        \end{equation}

$H^{(1)}$ is the usual Hankel function, see \cite[Chapter 14, Section 10]{MR3887685}. Moreover, $\Phi$ is the classical Helmholtz solution, ie it solves $(-\Delta)u-u=\delta_0$ in $\mathcal{S}'(\R^d)$. 

This is achieved through comparing the fundamental solutions of the logarithmic Laplacian (in Theorem \ref{thm:classification_logarithmic_laplacian}) with those of the classical Helmholtz problem. 

Note that to check that $\hat{E}^1_{\text{rem}}$ is actually a tempered distribution it should be observed, by considering the Laurent series of $\tfrac{1}{\text{Log}(z)}$ where $\text{Log}$ is the principal branch of the complex logarithm, then we have that 
\begin{equation*}
	\{||\xi|-1|<1\}\mapsto \frac{1}{\log(|\xi|^2)}-\frac{1}{|\xi|^2-1},
\end{equation*}
is continuous (or at least has a continuous extension).

\paragraph{Step 2.}

With our choice of solution $E=\Phi+E_{\text{rem}}^{1}+E_{\text{rem}}^{2}$, we verify the bound stated in Theorem \ref{thm:main_2}. 

We can quickly verify that both $\Phi$ and $E_{\text{rem}}^{1}$ satisfy the relevant bounds. For $\Phi$ this can be verified directly and for $E_{\text{rem}}^{1}$ this follows since it is the Fourier inverse of a bounded radial function which has compact support. 

It remains to show that $E_{\text{rem}}^{2}$ can be identified as a locally integrable function on $\{|x|\geq 2\}$ and satisfies the relevant bounds. This means that we only need to show that $E_{\log}^{2}$ and $E_{\text{Helm}}^{2}$ satisfies the relevant bounds (as well identifying as a locally integrable function away from $0$). 
\begin{itemize}
\item[Step 1.]
We introduce the following distributions:
\begin{equation*}
        \begin{split}
            \langle \hat{E}_{\text{Helm}}^{1},\varphi \rangle_{\mathcal{S}'(\R^d),\mathcal{S}(\R^d)}&:=\int_{||\xi|-1|<1}\frac{\varphi(\xi)-\varphi(\frac{1}{|\xi|}\xi)}{|\xi|^2-1}\,d\xi,\\
            \langle \hat{E}_{\text{Helm}}^{2},\varphi \rangle_{\mathcal{S}'(\R^d),\mathcal{S}(\R^d)}&:=\int_{|\xi|>2}\frac{\varphi(\xi)}{|\xi|^2-1}\,d\xi,
        \end{split}
\end{equation*}
    for $\varphi \in \mathcal{S}(\R^d)$. 

Note that $w$ solves the classical Helmholtz problem:
        \begin{equation}\label{eqn:helmholtz}
        (-\Delta)w-w=\delta_0, \quad \text{in $\mathcal{S}'(\R^d).$} 
    \end{equation}
	if and only if $\hat{w}$ is of the form 
    \begin{equation*}
        \hat{w}=\hat{E}_{\text{Helm}}^{1}+\hat{E}_{\text{Helm}}^{2}+v,
    \end{equation*}
    where 
 and $v\in \mathcal{SL}(S^{d-1})$, cf. \cite[Exemple, page 128]{MR209834}. Note that this classification isn't that much different from Theorem~\ref{thm:classification_logarithmic_laplacian}.  
	From Theorem \ref{thm:classification_logarithmic_laplacian}, we have that if $E$ is a fundamental solution to the logarithmic Laplacian (more specifically, the unique extension from $\mathcal{Z}'(\R^d)$ to $\mathcal{S}'(\R^d)$, cf. Definition ~\ref{def:fundamental_soln_log} and Remark~\ref{rem:unique_extension}) then we have that
\begin{equation*}
        \begin{split}
		\hat{E}&=\hat{E}^1_{\log}+\hat{E}^2_{\log}+v,\\
		&=\hat{E}^1_{\text{Helm}}+\hat{E}^2_{\text{Helm}}+v+(\hat{E}^1_{\log}-\hat{E}^1_{\text{Helm}})+(\hat{E}^2_{\log}-\hat{E}^2_{\text{Helm}}).  
		\end{split}
\end{equation*}
Moreover, we have that 
\begin{equation}\label{eqn:important_comp}
    \begin{split}
        &\langle\hat{E}^{1}_{\log}-\hat{E}^{1}_{\text{Helm}},\varphi\rangle_{\mathcal{S}'(\R^d),\mathcal{S}(\R^d)}\\
    &=\int_{||\xi|-1|<1}\left (\frac{\varphi(\xi)-\varphi(\frac{1}{|\xi|}\xi)}{\log(|\xi|^2)}\right )-\left (\frac{\varphi(\xi)-\varphi(\frac{1}{|\xi|}\xi)}{|\xi|^2-1}\right )\,d\xi,\\
    &=\langle \hat{E}_{\text{rem}}^{1},\varphi\rangle_{\mathcal{S}'(\R^d),\mathcal{S}(\R^d)}+\int_{||\xi|-1|<1}\left (\frac{1}{\log(|\xi|^2)}-\frac{1}{|\xi|^2-1}\right )\varphi\left (\frac{1}{|\xi|}\xi\right )\,d\xi.
    \end{split}
\end{equation}
for $\varphi \in \mathcal{S}(\R^d)$. The distribution given by 
\begin{equation*}
    \varphi \in \mathcal{S}(\R^d)\mapsto \int_{||\xi|-1|<1}\left (\frac{1}{\log(|\xi|^2)}-\frac{1}{|\xi|^2-1}\right )\varphi\left (\frac{1}{|\xi|}\xi\right )\,d\xi,
\end{equation*}
happens to be a single layer distribution on the sphere. 
In turn, we have that $E$ is a fundamental solution of the logarithmic Laplacian, if and only if 
\begin{equation*}
	\hat{E}=\hat{E}^1_{\text{Helm}}+\hat{E}^2_{\text{Helm}}+\overline{v}+\hat{E}^1_{\text{rem}}+\hat{E}^2_{\text{rem}},
\end{equation*}
		
In turn, we know that there exists a $\overline{v}\in \mathcal{SL}(S^{d-1})$ such that $\hat{\Phi}=\hat{E}^1_{\text{Helm}}+\hat{E}^2_{\text{Helm}}+\overline{v}$ where $\Phi$ is given in~\eqref{eqn:helmholtz_soln}. This shows that there exists a solution of the form ${E}=\Phi+{E}^1_{\text{rem}}+{E}^2_{\text{rem}}$.
 \item[Step 2.]
 We verify the bounds of $E^{2}_{\text{Helm}}$ and $E^{2}_{\log}$ individually. 
\paragraph{$E^{2}_{\text{Helm}}$:}
In dimension 1, this is the Fourier inverse of a function in $L^1(\R)$ hence it is bounded and satisfies the bound needed. In dimension 2, we can approximate $E^{2}_{\text{Helm}}$ by $E^{2,N}_{\text{Helm}}$ which is the Fourier inverse of $\mathds{1}_{\{2<|\xi|<N\}}(|\cdot |^2-1)^{-1}$ and take the limit $N\rightarrow \infty$ in $\mathcal{S}'(\R^2)$. 

By direct computation, we have that
\begin{equation*}
    {E}_{\text{Helm}}^{2,N}(x)=(2\pi)^{-2}\int_{2}^{N}rJ_0(r|x|)\frac{\,dr}{r^2-1}, \quad \text{for $x\in \R^2, N>2$,}
\end{equation*}
where $J$ is the Bessel function of the first kind. 

Since $|J_{0}(r|x|)|\lesssim r^{-1/2}|x|^{-1/2}$, see \cite[Appendix B.8]{MR3243734}, we have that 
\begin{equation*}
    (2\pi)^{2}F^{2}_{\text{Helm}}(x)=\int_{2}^{\infty}rJ_0(r|x|)\frac{dr}{r^2-1}, \quad \text{for $x\ne 0$,}
\end{equation*}
and $|E^{2}_{\text{Helm}}(x)|\lesssim |x|^{-1/2}$ for $|x|>2$ which verifies the bound needed. 

\paragraph{$E^{2}_{\log}$:}

We focus on the proof in dimension 2 since the proof in dimension 1 is very similar. Again, we approximate $E^{2}_{\log}$ by $E^{2, N}_{\log}$, which is the Fourier inverse of $\mathds{1}_{\{2<|\xi|<N\}}(\log(|\cdot|^2))^{-1}$, and we take the limit $N\rightarrow \infty$ (in $\mathcal{S}'(\R^d)$) of $|\cdot|^2E^{2,N}_{\log}\rightarrow |\cdot|^2E^{2}_{\log}$ (note that multiplication with polynomials in continuous in $\mathcal{S}'(\R^d)$). 

By 2 applications of integration by parts, we have that 
\begin{equation*}
	\begin{split}
		(2\pi)^2{E}_{\log}^{2,N}(x)
		=&\int_{2}^{N}rJ_0(r|x|)\frac{\,dr}{\log(r)},\\
			=& \left (\frac{N}{\log(N)}\frac{J_1(N|x|)}{|x|}- \frac{2}{\log(2)}\frac{J_1(2|x|)}{|x|}\right ) - \frac{1}{|x|}\int_2^N \frac{J_1(r|x|)}{ (\log r)^2} dr,\\
			=& \underbrace{\left (\frac{N}{\log(N)}\frac{J_1(N|x|)}{|x|}- \frac{2}{\log(2)}\frac{J_1(2|x|)}{|x|}\right )}_{:=G_{N}^{1}}\\
			&+\underbrace{\left (\frac{J_0(2|x|)}{\log(2)^2|x|^2}-\frac{J_0(N|x|)}{\log(N)^2|x|^2} - \frac{1}{|x|^2}\int_2^N \frac{J_1(r|x|)}{ r(\log r)^3} dr\right )}_{:=G_{N}^{2}},\\
	\end{split}
\end{equation*}
for $x\ne 0$ and $N>2$. It is straightforward to check that $G^{1}:=\lim_{N\rightarrow \infty}G^{1}_{N}$, in $\mathcal{S}'(\R^2)$, is a locally integrable function of the form:

\begin{equation*}
	G^{1}(x)=- \frac{2}{\log(2)}\frac{J_1(2|x|)}{|x|}, \quad \text{for $x\in \R^d\setminus\{0\}$}.
\end{equation*}
Similarly, one can verify that $|\cdot|^2G_{N}^{2}\rightarrow |\cdot|^2G^{2}$ and $|\cdot|^2G^{2}$ is a (bounded) regular distribution of the form 
\begin{equation*}
	|x|^2G^{2}(x)=\frac{J_0(2|x|)}{\log(2)^2} - \int_2^\infty \frac{J_1(r|x|)}{ r(\log r)^3} dr, \quad \text{for $x\in \R^2$.}
\end{equation*}
In turn, we have that $G_{2}$ can be identified as a regular distribution (as a element of $\mathcal{D}'(\{|x|\geq 2\})$) which satisfies $|G^{2}(x)|\lesssim |x|^{-2}$ for $|x|\geq 2$. In turn, by considering the decay of $G^{1}$ and $G^2$ we have our result. 

\end{itemize}
\begin{remark}
	Note that what is stopping us from fully resolving Conjecture~\ref{conjecture:chen_veron} is by identifying $G^2$ (as defined in the previous proof) as a regular distribution on the whole of $\R^d$. We can only do this for $|\cdot |^2G^2$. It would be very interesting if one can resolve whether or not this is a regular distribution. 
\end{remark}

\section*{Appendix: Logarithmic Laplacian on the space of Lizorkin distributions}
\addcontentsline{toc}{section}{Appendix: Logarithmic Laplacian on the space of Lizorkin distributions}
\label{appendix:log_lizorkin}
The following proposition shows that the logarithmic Laplacian is well-defined on the space of Lizorkin distributions. 
\begin{proposition}
	The logarithmic Laplacian $\log(-\Delta):\mathcal{Z}'(\R^d)\rightarrow \mathcal{Z}'(\R^d)$, as given in Definition~\ref{def:log_laplacian_liz}, is well-defined. 
\end{proposition}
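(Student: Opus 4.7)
The plan is to transfer the question to the Fourier side. By Remark~\ref{rem:psi}, the Fourier transform restricts to a topological isomorphism $\mathcal{F}:\mathcal{Z}(\R^d)\to \Psi(\R^d)$, and on test functions $\varphi\in \mathcal{Z}(\R^d)$ one has $\mathcal{F}(\log(-\Delta)\varphi)(\xi)=2\log(|\xi|)\hat\varphi(\xi)$. Hence it suffices to prove that the multiplication map $M_\ell:g\mapsto 2\log(|\xi|)g(\xi)$ sends $\Psi(\R^d)$ continuously into itself; defining $\log(-\Delta)$ on $\mathcal{Z}'(\R^d)$ by duality through Definition~\ref{def:log_laplacian_liz} is then automatic, since the transpose of a continuous operator on $\mathcal{Z}(\R^d)$ is continuous on $\mathcal{Z}'(\R^d)$.

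To show $M_\ell g\in \Psi(\R^d)$ for $g\in \Psi(\R^d)$, note first that away from the origin, $\log(|\xi|)$ is smooth with $\partial^\alpha\log(|\xi|)=O(|\xi|^{-|\alpha|})$ for $|\alpha|\geq 1$ and $|\log(|\xi|)|\lesssim 1+|\xi|$ for $|\xi|\geq 1$, so the rapid decay of $g$ persists after multiplication. The main point is the behavior at $\xi=0$. For $\xi\neq 0$, Leibniz gives
\begin{equation*}
\partial^\alpha\bigl(\log(|\xi|)g(\xi)\bigr)=\sum_{\beta\leq\alpha}\binom{\alpha}{\beta}\,\partial^\beta\log(|\xi|)\,\partial^{\alpha-\beta}g(\xi).
\end{equation*}
Since $g\in \Psi(\R^d)$ vanishes to infinite order at the origin, so does each $\partial^{\alpha-\beta}g$; for $|\beta|\geq 1$ the singularity $|\partial^\beta\log(|\xi|)|\lesssim |\xi|^{-|\beta|}$ is absorbed by an estimate $|\partial^{\alpha-\beta}g(\xi)|\lesssim_{N} |\xi|^N$ with $N$ arbitrarily large, and the logarithmic factor in the $\beta=0$ term is dominated the same way. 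Consequently every partial derivative of $\log(|\xi|)g(\xi)$ extends continuously to $0$ with value $0$. Invoking the standard fact that a function continuous on $\R^d$, smooth on $\R^d\setminus\{0\}$, and whose partial derivatives of all orders admit continuous extensions through the origin is itself $C^\infty$ on $\R^d$, we conclude $M_\ell g\in \mathcal{S}(\R^d)$ and vanishes to infinite order at $0$, hence $M_\ell g\in \Psi(\R^d)$.

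Continuity of $M_\ell$ then follows by controlling the Schwartz seminorms $\sup_{\xi}|\xi^\gamma\partial^\alpha(\log(|\xi|)g(\xi))|$ by finitely many Schwartz seminorms of $g$. One splits the supremum into $\{|\xi|\leq 1\}$ and $\{|\xi|\geq 1\}$: on the exterior region the bound uses polynomial decay of $g$ together with $|\log(|\xi|)|\lesssim |\xi|$ and $|\partial^\beta\log(|\xi|)|\lesssim 1$ for $|\beta|\geq 1$; on the interior region the $|\xi|^{-|\beta|}$ singularity is dominated by a Taylor estimate $|\partial^{\alpha-\beta}g(\xi)|\lesssim |\xi|^N$ (with implicit constant a Schwartz seminorm of $g$), choosing $N$ large enough to beat $|\beta|+|\gamma|$. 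This establishes continuity of $\log(-\Delta)$ on $\mathcal{Z}(\R^d)$, and therefore well-definedness of its transpose on $\mathcal{Z}'(\R^d)$.

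The main obstacle is the smoothness at the origin: producing the $C^\infty$ extension of $\log(|\xi|)g(\xi)$ through $\xi=0$ is the only place where one genuinely needs the infinite-order vanishing, and where a naive argument on all of $\mathcal{S}(\R^d)$ would break down (since $\log(|\xi|)$ by itself is not a smooth multiplier near $0$). This is precisely the reason the Lizorkin framework is forced upon us, as already foreshadowed in the Introduction.
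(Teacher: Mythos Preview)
Your proof is correct and shares the paper's overall reduction to the Fourier side: both argue that if $\psi\in\Psi(\R^d)$ then $\log(|\cdot|^2)\psi\in\Psi(\R^d)$, using the infinite-order vanishing of $\psi$ at the origin to tame the singularity of the logarithm. The execution differs slightly. The paper factorizes $\log(|\xi|^2)\psi(\xi)=\bigl(|\xi|^{2m}\log(|\xi|^2)\bigr)\cdot\bigl(|\xi|^{-2m}\psi(\xi)\bigr)$ and argues that the first factor is $C^{m-1}$ at the origin (since $\lambda\mapsto\lambda^m\log\lambda$ is) while the second factor is smooth by Taylor's theorem and the product rule; letting $m\to\infty$ gives full regularity. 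You instead expand $\partial^\alpha(\log(|\xi|)\,g(\xi))$ directly via Leibniz and absorb the $|\xi|^{-|\beta|}$ singularities of $\partial^\beta\log(|\xi|)$ with the Taylor bound $|\partial^{\alpha-\beta}g(\xi)|\lesssim_N|\xi|^N$, then invoke the extension lemma for continuously extendable derivatives. Both routes rest on the same idea; yours is somewhat more direct in that it avoids the auxiliary $|\xi|^{\pm 2m}$ factorization (though the paper's treatment of $|\xi|^{-2m}\psi$ implicitly uses the very same Leibniz-plus-extension reasoning you spell out). You also go a step further by explicitly verifying continuity of $M_\ell$ on $\Psi(\R^d)$, which the paper omits but which is needed for the transpose to genuinely map into $\mathcal{Z}'(\R^d)$.
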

\begin{proof}
	It is sufficient to show that 
	\begin{equation*}
		\log(|\cdot|^2)\psi\in \Psi(\R^d),
	\end{equation*}
	if $\psi\in \Psi(\R^d)$ since it ensures that $\log(-\Delta)\varphi \in \mathcal{Z}(\R^d)$ for all $\varphi\in \mathcal{Z}(\R^d)$, see Remark \ref{rem:psi}. By $\log(|\cdot|^2)\psi$ we mean it's smooth extension at $0$ which we show exists.
	
	Note that 
	\begin{equation*}
		\log(|\cdot|^2)\psi=		|\cdot|^{2m}\log(|\cdot|^2)|\cdot|^{-2m}\psi, \quad 
	\text{for all $m\in \N$.} 
	\end{equation*} 
	Note that the function
	\begin{equation*}
		\lambda \mapsto \lambda^m\log(\lambda), \quad \text{for $\lambda\in (0,\infty)$},
	\end{equation*}
	has a smooth extension at $0$ (up to $m-1$ derivatives). Moreover, the function and it's derivatives (up to $m-1$ derivatives) grow at an algebraic rate. In turn, we have that $|\cdot|^{2m}\log(|\cdot|^2)$ has smooth derivatives up to order $m-1$ which grow at an algebraic rate. 
	
	The derivatives of $|\cdot|^{-2m}\psi$ decay at a rapid rate. Moreover, by the Taylor remainder theorem, we have that 
	\begin{equation*}
	|\psi(\xi)|\leq |\xi|^n,\quad \text{for $|\xi|\leq 1$ and $n\in \N$. }
	\end{equation*}
	Hence, we can deduce by the product rule, that $|\cdot|^{-2m}\psi$ has a smooth extension at $0$. In turn, since $m$ is arbitrary, we have that 
		\begin{equation*}
	\log(|\cdot|^2)\psi\in \Psi(\R^d).
	\end{equation*}
\end{proof}
\bibliographystyle{alpha}

\end{document}